\def\BbbE{{{\rm I}  \! {\rm E}}}
\newcommand{\Var}{{\rm Var}}
\newcommand{\Cov}{{\rm Cov}}
\def \E{\BbbE}
\newcommand{\cal}{\mathcal}
\theoremstyle{plain}
\newtheorem{theorem}{Theorem}
\newtheorem{lemma}{Lemma}[section]
\newtheorem{proposition}[lemma]{Proposition}
\theoremstyle{definition}
\begin{document}
\bibliographystyle{plain}
\title[Estimating drift parameters in a fractional OU process]{Estimating drift parameters in a fractional Ornstein Uhlenbeck process with periodic mean}
\author[H. Dehling]{Herold Dehling}
\author[B. Franke]{Brice Franke}
\author[J.H.C. Woerner]{Jeannette H.C. Woerner}
\today
\address{Fakult\"at f\"ur Mathematik, Ruhr-Universit\"at Bochum,
44780 Bochum, Germany}
\email{herold.dehling@ruhr-uni-bochum.de}
\address{D\'{e}partement de Math\'{e}matique, 
Universit\'{e} de Bretagne Occidentale, 29200 Brest, France}
\email{brice.franke@univ-brest.fr}
\address{Fakult\"at f\"ur Mathematik, 
Technische Universit\"at Dortmund,
44221 Dortmund, Germany}
\email{jwoerner@mathematik.uni-dortmund.de}

\keywords{}

\begin{abstract}
We construct a least squares estimator for the drift parameters of a fractional Ornstein Uhlenbeck process with periodic mean function and long range dependence. For this estimator we prove consistency and asymptotic normality. In contrast to the classical fractional Ornstein Uhlenbeck process without periodic mean function the rate of convergence is slower depending on the Hurst parameter $H$, namely $n^{1-H}$.
\end{abstract}
\maketitle

{\bf key words:} fractional Ornstein Uhlenbeck process, long range dependence, periodic mean function, least squares estimator
\section{Introduction}

The classical mean reverting Ornstein Uhlenbeck process is defined by the following stochastic differential equation
$$ dX_t=-\alpha X_tdt+\sigma dB_t ,$$
where $ \alpha $ and $ \sigma $ are positive real numbers and $ (B_t)_{t\geq0} $ is a standard Brownian motion.
Under Gaussian initial conditions the resulting process $ (X_t)_{t\geq0} $ is a Gaussian process.
This property and the strong mean reverting character of $ (X_t)_{t\geq0} $ lead to a long history in various applications. Slight modifications of the above SDE have been proposed to 
cover further features of observed data.  
For example we may add a deterministic drift term to the above stochastic differential equation;  i.e.:
$$ dX_t=(L(t)-\alpha X_t)dt+\sigma dB_t .$$
The resulting process is mean reverting around the time dependent center $ L(t) $. This may be used to model seasonalities 
or trends in the data.
However, the  process $ (X_t)_{t\geq0} $ is a semimartingale and thus the range of applications of the above models is limited 
to data which do not exhibit long range dependence. In situations with long range dependence but without mean reversion
fractional Brownian motion $ (B_t^H)_{t\geq0} $ is the classical model. Once the Hurst parameter $ H\in (0,1) $ fixed it is characterized as the unique continuous zero mean Gaussian process with
covariance 
$$ \E[B_t^HB_s^H]=\frac{1}{2}\left(|t|^{2H}+|s|^{2H}-|t-s|^{2H}\right) .$$ Hence we see that for $H=1/2$, we recover Brownian motion and for $H>1/2$ we have long range dependence. One possibility of a representation of fractional Brownian motion is the moving average representation in Mandelbrot and Van Ness \cite{MVN} of the form
\[ B_t^H= C \int_{-\infty}^{\infty} (a_1((t-s)_+^{H-1/2}-(-s)_+^{H-1/2})+a_2((t-s)_-^{H-1/2}-(-s)_-^{H-1/2}))dW_s,\]
where $x_+=\max(x,0)$, $x_-=\max(-x,0)$, $a_1,a_2\in  \mathbb{R}$, $H\in(0,1)$, $C$ is a normalizing constant and $W$ denotes a two-sided Brownian motion. This is defined by  $W_t=W^1_t$ if $t\geq 0$ and $W_t=-W^2_{-t}$ if $t<0$, where $W^1$ and $W^2$ denote independent copies of $W$.

In view of combining the mean reverting character of the Ornstein Uhlenbeck model with the long  memory property of the fractional Brownian motion we introduce for $H>1/2$ a generalized fractional Ornstein Uhlenbeck processes by the SDE
\begin{eqnarray}  \label{DGL}
   dX_t=(L(t)-\alpha X_t)dt+\sigma dB_t^H .
\end{eqnarray}

First we have to specify in which sense we interpret integrals with respect to fractional Brownian motion. Unless in the case of Brownian motion, where due to the semimartingale setting the It\^{o} integral is the canonical choice, this is not so clear for fractional Brownian motion with $H>1/2$. We will now interpret our integrals as divergence integral (cf.\ e.g. \cite{Nu}) since this type of integral allow us to derive a solution to (\ref{DGL}) in an analogous form as in the Brownian case (cf. \cite{DFK}). This would not be the case, if we interpret the integral as pathswise Riemann Stieltjes integral. Furthermore, in \cite{HuNu} it was shown that a least squares approach for the mean reverting parameter in a fractional Ornstein Uhlenbeck setting without periodic mean only leads to a consistent estimator when the integrals are interpreted as divergence integrals.

In this paper we want to study the particular class of periodic drift functions $ L(t)=\sum_{i=1}^p\mu_i\varphi_i(t) $,
where the functions $ \varphi_i(t) ; i=1,...,p $ are bounded and periodic with the same period $ \nu $ and the real numbers 
$ \mu_i;i=1,...,p $ are unknown parameters. Those parameters together with the positive real number $ \alpha $ will be used to fit
the model to the data. The parameter $ \sigma>0 $ will be assumed known, since for continuous observations it may be read of the sample paths immediately. 
Thus we have to estimate $ \theta:=(\mu_1,...,\mu_p,\alpha)^t $ from an $ p+1 $-dimensional parameter space $ \Theta:=\mathbb{R}^p\times\mathbb{R}^+ $.
Concerning the sampling scheme we assume that we observed the full path of the process $ (X_t)_{t\geq0} $ on a time interval 
$ T $ which is a multiple of the time period $ \nu $; i.e. $ T=n\nu $ for some $ n\in\mathbb{N} $ and let $n\to\infty$.
In the following the Lebesgue measure on the real line will be denoted by $ \ell $. 
Without loss of generality we assume that the functions $ \varphi_i;i=1,...,p $ are orthonormal in $ L^2([0,\nu],\nu^{-1}\ell) $; i.e.
$$ \int_0^\nu \varphi_i(t)\varphi_j(t)dt=\delta_{ij}\nu .$$
Moreover, we will assume that the functions $ \varphi_i;i=1,...,p $ are bounded by a constant $ C>0 $. 
The resulting SDE then takes the form
\begin{eqnarray*} 
      dX_t=\left(\sum_{i=1}^p\mu_i\varphi_i(t)-\alpha X_t\right)dt+\sigma dB_t^H 
\end{eqnarray*}
 with initial condition $ X_0=\xi_0 $, where $ \xi_0 $ is a random variable independent of the fractional Brownian motion $ (B^H_t)_{t\in\mathbb{R}} $.
 
 The outline of the paper is the following. First we provide some properties of the model taking into account the special features of the periodic mean function. In section 3 we derive our estimator and in section 4 and 5 we prove consistency and asymptotic normality, respectively.

\section{Some preliminary facts on the model}
First of all we need some properties of our model which rely on the periodicity of the mean function.
Now we derive an explicit solution for equation (\ref{DGL}).

\begin{proposition}
The following stochastic process $ (X_t)_{t\geq0} $ given by 
$$  X_t=e^{-\alpha t}\left(\xi_0+\int_0^t e^{\alpha s}L(s)ds-\sigma\int_0^te^{\alpha s}dB^H_s\right); \ t\geq0 $$
is the unique almost surely continuous solution of equation (\ref{DGL}) with initial condition $ X_0=\xi_0 $.
\end{proposition}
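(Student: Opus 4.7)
The plan is to verify directly that the proposed formula solves the SDE (\ref{DGL}) and then handle uniqueness by an elementary ODE argument on the difference of two solutions. A key simplification is that the integrand $e^{\alpha s}$ is deterministic, so $\int_0^t e^{\alpha s}\,dB^H_s$ is a Wiener integral with respect to fractional Brownian motion; in this situation the divergence integral coincides with the Wiener integral and the usual deterministic-calculus product and chain rules remain available.

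First I would set $Y_t:=e^{\alpha t}X_t$ for $X_t$ given by the candidate formula, so that
\[
Y_t \;=\; \xi_0+\int_0^t e^{\alpha s}L(s)\,ds\;-\;\sigma\int_0^t e^{\alpha s}\,dB^H_s.
\]
Here the first integral is a classical Lebesgue integral in $t$ and the second is a Wiener integral whose indefinite version is almost surely continuous in $t$ (in fact H\"older of every order $<H$). Differentiating, $dY_t=e^{\alpha t}L(t)\,dt-\sigma e^{\alpha t}\,dB^H_t$. Writing $X_t=e^{-\alpha t}Y_t$ and applying the product rule with the smooth deterministic factor $e^{-\alpha t}$ gives
\[
dX_t \;=\; -\alpha e^{-\alpha t}Y_t\,dt+e^{-\alpha t}\,dY_t \;=\; \bigl(L(t)-\alpha X_t\bigr)dt+\sigma\,dB^H_t,
\]
which is precisely (\ref{DGL}) (with the sign of $\sigma$ absorbed into the Wiener integral). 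The initial condition $X_0=\xi_0$ follows by evaluating the formula at $t=0$, and almost sure continuity of $t\mapsto X_t$ is inherited from that of $t\mapsto B^H_t$, of the Lebesgue integral in $t$, and of the indefinite Wiener integral mentioned above.

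For uniqueness, I would consider two almost surely continuous solutions $X_t,\widetilde X_t$ starting from the same $\xi_0$ and set $Z_t:=X_t-\widetilde X_t$. Since the stochastic term $\sigma\,dB^H_t$ and the forcing $L(t)\,dt$ cancel, $Z_t$ satisfies the pathwise deterministic linear ODE $\dot Z_t=-\alpha Z_t$ with $Z_0=0$, hence $Z_t\equiv 0$ almost surely.

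The one genuinely delicate point is the justification of the product rule for $e^{-\alpha t}Y_t$ within the divergence-integral framework. I expect this to be the main obstacle, but it is not a serious one: for $H>1/2$ the Young/Z\"ahle fractional calculus applies pathwise to integrals of smooth deterministic functions against $B^H$, and equivalently one may invoke the Malliavin-calculus identity that for deterministic integrands the Skorohod/divergence integral reduces to the Wiener integral, so the product rule with a $C^1$ deterministic multiplier is standard. Everything else is routine manipulation of the explicit formula.
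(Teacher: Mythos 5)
Your proof is correct, and it is more self-contained than the paper's. The paper disposes of this proposition by citation: it invokes the representation of the fractional Ornstein--Uhlenbeck process with initial condition in Cheridito--Kawaguchi--Maejima and the Brownian analogue in Dehling--Franke--Kott, remarking only that the It\^o formula for the divergence integral differs from the classical one by a second-derivative term that does not contribute for a linear drift. You instead observe that the only stochastic integral occurring in the candidate solution has the deterministic integrand $e^{\alpha s}$, so the divergence integral reduces to a Wiener integral and the product rule for $e^{-\alpha t}Y_t$ is available pathwise (Young/Z\"ahle calculus for $H>1/2$); this bypasses the divergence-integral It\^o formula altogether, which is a legitimate simplification since the delicate divergence-versus-pathwise issue only arises later for integrands like $X_t$. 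You also supply the uniqueness argument explicitly --- the difference of two continuous solutions satisfies the pathwise ODE $\dot Z_t=-\alpha Z_t$ with $Z_0=0$, hence vanishes --- where the paper leaves uniqueness implicit in its citation. One small point to tighten: as written, the formula with the minus sign in front of $\sigma\int_0^t e^{\alpha s}\,dB^H_s$ literally yields $dX_t=(L(t)-\alpha X_t)\,dt-\sigma\,dB^H_t$, not $+\sigma\,dB^H_t$; your parenthetical about absorbing the sign is the right observation (the minus sign is presumably a typo in the statement, compare the plus sign in the stationary solution of Proposition 2), but you should say explicitly that $-B^H$ is again a fractional Brownian motion rather than displaying $+\sigma\,dB^H_t$ as the direct outcome of the computation.
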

\begin{proof}
This follows analogously to the Brownian case in \cite{DFK} from the representation of the fractional Ornstein Uhlenbeck process with initial condition $ \xi_0 $ in \cite{CKM}, since the It\^{o} formula for the divergence integral of fractional Brownian motion only differs in the term of the second derivatives, which do not contribute in the Ornstein-Uhlenbeck setting, cf. \cite{Nu}.
\end{proof}
In the following we need a stationary solution of equation (\ref{DGL}) in order to prove a necessary ergodic theorem. The stationary solution is given in an analogous way as moving average as in the classical Ornstein Uhlenbeck process.

\begin{proposition}
The following stochastic process $ (\tilde{X}_t)_{t\geq0} $ given by
$$ \tilde{X}_t:=e^{-\alpha t}\left(\int_{-\infty}^te^{\alpha s}L(s)ds+\sigma\int_{-\infty}^te^{\alpha s}dB^H_s\right) $$
is an almost surely continuous solution of equation (\ref{DGL}).
\end{proposition}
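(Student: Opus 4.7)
The plan is to verify directly that the given formula defines an almost surely continuous process satisfying equation (\ref{DGL}), by writing $\tilde{X}_t = e^{-\alpha t}(U_t + \sigma V_t)$ with
\[ U_t := \int_{-\infty}^t e^{\alpha s}L(s)\,ds \quad\text{and}\quad V_t := \int_{-\infty}^t e^{\alpha s}\,dB^H_s,\]
and then applying the product rule. Since $L$ is bounded (as a linear combination of the bounded $\varphi_i$) and $\alpha>0$, the Lebesgue integral defining $U_t$ converges absolutely and is continuously differentiable with $U'_t = e^{\alpha t}L(t)$. For $V_t$, interpreted as a divergence integral in the sense of \cite{Nu}, the integrand $s\mapsto \mathbf{1}_{(-\infty,t]}(s)e^{\alpha s}$ is smooth and exponentially decaying at $-\infty$ and so belongs to the natural Hilbert space $|\mathcal{H}|$ associated with $(B^H_s)$ for $H>1/2$; hence $V_t$ is a well-defined centred Gaussian random variable.

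Next I would compute $d\tilde{X}_t$. Because $e^{-\alpha t}$ is deterministic and $V_t$ is the divergence integral of a deterministic kernel, no Malliavin trace term appears and the ordinary product rule applies (cf.\ \cite{Nu}), giving
\[ d(e^{-\alpha t}V_t) = -\alpha e^{-\alpha t}V_t\,dt + e^{-\alpha t}\cdot e^{\alpha t}\,dB^H_t = -\alpha e^{-\alpha t}V_t\,dt + dB^H_t,\]
while $d(e^{-\alpha t}U_t) = -\alpha e^{-\alpha t}U_t\,dt + L(t)\,dt$ by the classical Leibniz rule. Adding these contributions with the factor $\sigma$ yields $d\tilde{X}_t = (L(t)-\alpha\tilde{X}_t)\,dt + \sigma\,dB^H_t$, which is exactly (\ref{DGL}). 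This mirrors the argument in the Brownian case from \cite{DFK}.

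Finally, $U_t$ is clearly continuous. For $V_t$, I would split $V_t = V_0 + \int_0^t e^{\alpha s}\,dB^H_s$; on $[0,t]$ the divergence integral agrees with the pathwise Young integral since $s\mapsto e^{\alpha s}$ is smooth and $H>1/2$, and integration by parts gives $\int_0^t e^{\alpha s}\,dB^H_s = e^{\alpha t}B^H_t - \alpha\int_0^t e^{\alpha s}B^H_s\,ds$, which is almost surely continuous in $t$. The step I expect to require the most care is the improper lower limit: one must show that $V_0 = \int_{-\infty}^0 e^{\alpha s}\,dB^H_s$ is well-defined as a random variable. This follows from an $L^2$-Cauchy argument using the fBm covariance to bound $\Var(\int_{-N}^0 e^{\alpha s}\,dB^H_s - \int_{-M}^0 e^{\alpha s}\,dB^H_s)$, and almost sure convergence is then obtained via Gaussian concentration along a geometric subsequence combined with Borel--Cantelli. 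Once $V_0$ exists, almost sure continuity of $V_t$, and hence of $\tilde{X}_t$, on $[0,\infty)$ follows.
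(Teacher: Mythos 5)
Your argument is correct, but it takes a different route from the paper: the paper's proof of this proposition is essentially a one-line citation of the representation of the stationary fractional Ornstein--Uhlenbeck process in Cheridito--Kawaguchi--Maejima \cite{CKM}, plus the observation that the lower limit $-\infty$ causes no trouble because the moving-average (Mandelbrot--Van Ness) representation defines $B^H$ on all of $\mathbb{R}$. You instead carry out the verification directly: splitting off the deterministic part $U_t$ (where boundedness of $L$ and $\alpha>0$ give absolute convergence and the classical Leibniz rule), checking that the kernel $s\mapsto \mathbf{1}_{(-\infty,t]}(s)e^{\alpha s}$ lies in the relevant Hilbert space for $H>1/2$ so that $V_t$ is a well-defined Gaussian variable, using that for a deterministic integrand the divergence integral carries no Malliavin trace correction so the product rule applies, and obtaining pathwise continuity via the Young/Riemann--Stieltjes integration-by-parts identity $\int_0^t e^{\alpha s}\,dB^H_s = e^{\alpha t}B^H_t - \alpha\int_0^t e^{\alpha s}B^H_s\,ds$ together with an $L^2$-Cauchy argument for the tail $\int_{-\infty}^0$. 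All of these steps are sound, and your treatment of the improper lower limit is exactly the point the paper flags as the only delicate one. What the paper's citation buys is brevity and a guarantee (from \cite{CKM}) that the construction is consistent with the chosen notion of integral; what your version buys is a self-contained proof that makes explicit where boundedness of $L$, the sign of $\alpha$, and the condition $H>1/2$ actually enter. The only cosmetic caveat is that $L$ is merely bounded and periodic, so $U$ is Lipschitz rather than $C^1$; since (\ref{DGL}) is understood in integral form this does not affect the conclusion.
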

\begin{proof}
This follows from the representation of the stationary fractional Ornstein Uhlenbeck process in \cite{CKM}. Note that we do not have any problem with the integral starting from $-\infty$ since the moving average representation defines the fractional Brownian motion on the whole real line.
\end{proof}

Next we show that for large $t$ the difference between the two representations tends to zero.

\begin{proposition} \label{Approx}
As $ t\rightarrow\infty $ we obtain  almost surely that $ |X_t-\tilde{X}_t|\rightarrow 0$.
\end{proposition}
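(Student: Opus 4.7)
The plan is to compute $X_t-\tilde{X}_t$ explicitly by splitting the moving-average integrals in $\tilde{X}_t$ at time $0$, show that the difference has the form $e^{-\alpha t}Z$ for some single a.s.\ finite random variable $Z$ that does not depend on $t$, and then invoke $\alpha>0$ to conclude.

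First, I would write
\begin{equation*}
\int_{-\infty}^t e^{\alpha s}L(s)\,ds=\int_{-\infty}^0 e^{\alpha s}L(s)\,ds+\int_0^t e^{\alpha s}L(s)\,ds,
\end{equation*}
and the analogous decomposition for the fBm divergence integral (which is legitimate because for deterministic integrands the divergence integral coincides with the Wiener integral, and this integral is additive over disjoint intervals). Substituting into the formula for $\tilde{X}_t$ and subtracting from the formula for $X_t$ in the preceding proposition, the two $\int_0^t$ pieces cancel (assuming a consistent sign convention in the two representations), leaving
\begin{equation*}
X_t-\tilde{X}_t = e^{-\alpha t}\,Z,\qquad Z:=\xi_0-\int_{-\infty}^0 e^{\alpha s}L(s)\,ds-\sigma\int_{-\infty}^0 e^{\alpha s}dB_s^H,
\end{equation*}
so that $Z$ is a fixed random variable independent of $t$.

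Next I would verify that $|Z|<\infty$ almost surely by checking each term. The initial value $\xi_0$ is a.s.\ finite by assumption. For the deterministic integral, $L=\sum_i\mu_i\varphi_i$ is bounded (since each $\varphi_i$ is bounded) and $e^{\alpha s}$ is integrable on $(-\infty,0]$ because $\alpha>0$, so this integral is finite. For the fBm term, I would use that for $H>1/2$ and a deterministic integrand $f$ one has the second moment identity
\begin{equation*}
\E\!\left[\left(\int_{-\infty}^0 f(s)\,dB_s^H\right)^2\right]=H(2H-1)\int_{-\infty}^0\!\int_{-\infty}^0 f(s)f(u)|s-u|^{2H-2}\,ds\,du,
\end{equation*}
which for $f(s)=e^{\alpha s}$ is finite by direct estimation (split into $|s-u|\le 1$ and $|s-u|>1$, using the exponential decay of $f$ at $-\infty$). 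Hence $\int_{-\infty}^0 e^{\alpha s}dB_s^H$ is a centered Gaussian, a.s.\ finite.

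Finally, since $Z$ is a.s.\ finite and $\alpha>0$, we obtain $|X_t-\tilde{X}_t|=e^{-\alpha t}|Z|\to 0$ almost surely as $t\to\infty$. The main obstacle is the fBm term: one must justify that the divergence integral $\int_{-\infty}^0 e^{\alpha s}dB_s^H$ is well defined and a.s.\ finite; this is handled by the Wiener-integral interpretation for deterministic integrands together with the second-moment formula above, which reduces the issue to integrability of $e^{\alpha(s+u)}|s-u|^{2H-2}$ on $(-\infty,0]^2$.
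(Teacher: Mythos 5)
Your proposal is correct and follows essentially the same route as the paper: split the moving-average integrals in $\tilde{X}_t$ at $s=0$, cancel the common $\int_0^t$ pieces, and observe that the remainder is $e^{-\alpha t}$ times a fixed a.s.\ finite random variable (the paper states this as an inequality bounding $|X_t-\tilde X_t|$ by the three corresponding terms and asserts the right-hand side vanishes). Your extra verification that $\int_{-\infty}^0 e^{\alpha s}\,dB_s^H$ is an a.s.\ finite Gaussian via the second-moment formula is a detail the paper leaves implicit, and your parenthetical about the sign convention correctly flags an apparent typo in the solution formula for $X_t$.
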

\begin{proof}
The explicit representations of $ X_t $ and $ \tilde{X}_t $ yield
\begin{eqnarray*}
 |X_t-\tilde{X}_t| & \leq & e^{-\alpha t}|\xi_0|+e^{-\alpha t}\left|\int_{-\infty}^0 e^{\alpha s}L(s)ds\right| 
                                          + \sigma e^{-\alpha t}\left|\int_{-\infty}^0e^{\alpha s}dB^H_s\right| ,
\end{eqnarray*}
where the right hand side tends to zero almost surely as $t\to\infty$.
\end{proof}
From this solution we may now construct a stationary and ergodic sequence of random variables which we need later for our limit theorems.
\begin{proposition} \label{Ergodisch}
Assume that $L$ is periodic with period 1, then the sequence of $ C[0,1] $-valued random variables $$ W_k(s):=\tilde{X}_{k-1+s} , 0\leq s\leq1, k\in\mathbb{N} $$
is stationary and ergodic.
\end{proposition}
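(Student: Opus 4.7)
I would start by decomposing $\tilde X_t = m(t) + Y_t$, where
\[ m(t) := e^{-\alpha t}\int_{-\infty}^t e^{\alpha s}L(s)\,ds, \qquad Y_t := \sigma e^{-\alpha t}\int_{-\infty}^t e^{\alpha s}\,dB^H_s. \]
The substitution $u=s-1$ combined with $L(u+1)=L(u)$ shows $m(t+1)=m(t)$, so $m$ is deterministic and $1$-periodic, and hence $W_k(s) = m(s)+Y_{k-1+s}$. Both claims therefore reduce to the same assertions for the $C[0,1]$-valued sequence $V_k(s):=Y_{k-1+s}$.

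For stationarity, I would apply the same substitution $u=s-1$ inside the Wiener integral defining $Y_{t+1}$ and then use that the translated process $(B^H_{u+1}-B^H_1)_{u\in\R}$ has the same law as $(B^H_u)_{u\in\R}$ by the stationarity of the increments of fBm. This yields $(Y_{t+1})_{t\in\R}\stackrel{d}{=}(Y_t)_{t\in\R}$ as $C(\R)$-valued processes, which is the $1$-shift invariance of the law of $Y$, and immediately implies that of $(V_k)_{k\in\mathbb{N}}$ and hence of $(W_k)_{k\in\mathbb{N}}$.

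For ergodicity, I would invoke the classical Maruyama--Fomin--Grenander dichotomy: the time-shift on a centered stationary Gaussian process is ergodic (equivalently, strongly mixing) if and only if its covariance function vanishes at infinity. Using the isometry $\E[(\int f\,dB^H)(\int g\,dB^H)] = H(2H-1)\iint f(u)g(v)|u-v|^{2H-2}\,du\,dv$ valid for $H>1/2$, the covariance of $Y$ takes the form
\[ R(h) := \Cov(Y_0,Y_h) = \sigma^2 H(2H-1)\,e^{-\alpha h}\int_{-\infty}^{0}\int_{-\infty}^{h}e^{\alpha(u+v)}|u-v|^{2H-2}\,du\,dv, \]
which I would show tends to $0$ as $h\to\infty$, in fact at the polynomial rate $h^{2H-2}$. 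This delivers mixing of the time-shift on $Y$, and via the decomposition the ergodicity of $(W_k)_{k\in\mathbb{N}}$.

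The main obstacle is this covariance estimate, since the prefactor $e^{-\alpha h}$ competes with a double integral whose dominant mass sits near $v=h$, where $e^{\alpha v}$ is large but $|u-v|^{2H-2}$ is small. Splitting the inner integral at $0$ and $h/2$, or integrating by parts to transfer the exponential onto a bounded factor, leads to the bound $R(h)=O(h^{2H-2})$, which is more than enough to conclude mixing and thus ergodicity.
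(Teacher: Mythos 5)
Your proposal is correct, but it reaches ergodicity by a genuinely different route than the paper. The paper also splits off the periodic deterministic part $\tilde h$, but then it rewrites the stochastic part of $W_k$ as an explicit almost sure series $F_0(Y_k)+\sum_{j\le 0}e^{\alpha(j-1)}F(Y_{j+k-1})$, where $Y_l$ is the $C[0,1]$-valued increment of $B^H$ over $[l-1,l]$; stationarity and ergodicity of $(W_k)$ are then inherited because $(W_k)$ is a fixed measurable functional of the shifted stationary ergodic sequence $(Y_l)_{l\in\mathbb Z}$ (fractional Gaussian noise). You instead work directly with the continuous-time stationary Gaussian process $Y_t=\tilde Z_t$ and invoke the Maruyama criterion via decay of its covariance. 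Both work; the paper's factor representation cleanly handles the $C[0,1]$-valued structure but quietly defers the real content to the assertion that $(Y_l)$ is ergodic, whereas your argument is more self-contained in that it exhibits the quantitative covariance decay $R(h)=O(h^{2H-2})$ (this is indeed the known asymptotic for the stationary fractional Ornstein--Uhlenbeck process, cf.\ Cheridito--Kawaguchi--Maejima, so your sketched splitting of the integral will go through). Two small points of care: the dichotomy you cite is not quite an equivalence --- for stationary Gaussian processes ergodicity is equivalent to the spectral measure having no atoms, while it is \emph{mixing} that is equivalent to the covariance vanishing at infinity --- but since you establish the stronger mixing condition this does not affect your conclusion; and you should note explicitly that mixing of the continuous-time shift flow passes to the time-one map and hence to the $C[0,1]$-valued skeleton $(V_k)$, since ergodicity alone of a flow does not in general transfer to its discrete skeleton.
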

\begin{proof}
Since $L$ is periodic, the function $$ \tilde{h}(t):=e^{-\alpha t}\int_{-\infty}^te^{\alpha s}L(s)ds $$ is also periodic on $ \mathbb{R} $.
We have for any $ t\in[0,1] $ that
\begin{eqnarray*}
  W_k(t) &=& e^{-\alpha (k-1+t)}\int_{-\infty}^{k-1+t}e^{\alpha s}L(s)ds +\sigma e^{-\alpha (k-1+t)}\int_{-\infty}^{k-1+t}e^{\alpha s}dB_s^H \\
            &=& \tilde{h}(t) + \sigma e^{-\alpha(k-1+t)}\int_{k-1}^{k-1+t}e^{\alpha s}dB_s^H +\sigma \sum_{l=-\infty}^{k-1}e^{-\alpha(k-1+t)}\int_{l-1}^le^{\alpha s}dB_s^H \\
    &=& \tilde{h}(t) + \sigma e^{-\alpha t}\int_0^t e^{\alpha s} dB^H_{s+k-1} +\sigma \sum_{l=-\infty}^{k-1}e^{-\alpha(k-l+t)}\int_0^1e^{\alpha s}dB_{s+l-1}^H \\
    &=&  \tilde{h}(t) + \sigma e^{-\alpha t}\int_0^t e^{\alpha s} dB^H_{s+k-1} +\sigma \sum_{j=-\infty}^0e^{-\alpha(t+1-j)}\int_0^1e^{\alpha s}dB_{s+j+k-2}^H .
\end{eqnarray*}
Thus, we have the almost sure representation
\begin{eqnarray*}
  W_k(\cdot) &=& \tilde{h}(\cdot)+F_0(Y_k)+\sum_{j=-\infty}^0 e^{\alpha(j-1)}F(Y_{j+k-1})
\end{eqnarray*}
with the functionals
$$ F_0:C[0,1]\rightarrow C[0,1];\omega\mapsto\left(t\mapsto\sigma e^{-\alpha t}\int_0^t e^{\alpha s}d\omega(s)\right) ,$$
$$ F:C[0,1]\rightarrow \mathbb C[0,1];\omega\mapsto\sigma e^{-\alpha t}\int_0^1 e^{\alpha s}d\omega(s) $$
and the $ C[0,1] $-valued random variable
$$ Y_l:=\left[s\mapsto B^H_{s+l-1}-B^H_{l-1}; 0\leq s\leq 1\right] .$$
The sequence of Gaussian random variables $ (Y_l)_{l\in\mathbb{Z}} $ is stationary and ergodic. This implies that the sequence of $ C[0,1] $-valued random variables 
$ (W_k)_{k\in\mathbb{N}} $ is stationary and ergodic.
\end{proof}

\section{The estimator and its motivation}

In this section we want to motivate a particular estimator $ \hat{\theta} $ for the parameter $ \theta $ through a least squares approach. 
The same kind of approach was used in \cite{FK} to derive an estimator for drift parameters in a L\'evy driven stochastic differential equations.
First, we analyze the more general estimation problem of a  $ p+1 $-dimensional  parameter vector $ \theta=(\theta_1,...,\theta_{p+1}) $ in the stochastic differential equation 
$$  dX_t=\theta f(t,X_t)dt+\sigma dB_t^H ,$$
where $ f(t,x)=(f_1(t,x),...,f_{p+1}(t,x))^t $ with suitable real valued functions $ f_i(t,x); 1\leq i\leq p $. 
A discretization of the above equation on the time interval $ [0,T] $ yields for $ \Delta t:=T/N $ and $ i=1,...,N $
\begin{eqnarray} \label{DiskDGL}  
X_{(i+1)\Delta t}-X_{i\Delta t}=\sum_{j=1}^{p+1}f_j(i\Delta t,X_{i\Delta t})\theta_j\Delta t +\sigma\left(B^H_{(i+1)\Delta t}-B^H_{i\Delta t}\right) .
\end{eqnarray}
The system of equations (\ref{DiskDGL}) has some resemblance to classical linear models with dependent noise.
In accordance to the classical least squares approach for estimation in linear models we can try to minimize the least squares functional
\begin{eqnarray} \label{KQFunktional} 
 {\cal G}:(\theta_1,...,\theta_{p+1})\mapsto\sum_{i=1}^N\left(X_{(i+1)\Delta t}-X_{i\Delta t}-\sum_{j=1}^{p+1}f_j(i\Delta t,X_{i\Delta t})\theta_j\Delta t\right)^2 .
\end{eqnarray}
It was discussed in \cite{FK} that the minimizer $ \tilde{\theta}_{T,\Delta t} $ of the above functional has the form
$$ \tilde{\theta}_{T,\Delta t}=Q_{T,\Delta t}^{-1}P_{T,\Delta t} ,$$
with
\[  Q_{T,\Delta t}=\left(\begin{array}{ccc}   
\sum_{i=0}^Nf_1(i\Delta t,X_{i\Delta t})f_1(i,\Delta t,X_{i\Delta t})\Delta t & \! \dots \! &  \sum_{i=0}^Nf_1(i\Delta t,X_{i\Delta t})f_p(i,\Delta t,X_{i\Delta t})\Delta t \\
 \vdots & \! \!   & \vdots \\
\sum_{i=0}^Nf_p(i\Delta t,X_{i\Delta t})f_1(i,\Delta t,X_{i\Delta t})\Delta t & \!\dots \! & \sum_{i=0}^Nf_p(i\Delta t,X_{i\Delta t})f_p(i,\Delta t,X_{i\Delta t})\Delta t 
\end{array}\right)
\]
and
\[ P_{T,\Delta t}:=\left(\sum_{i=1}^Nf_1(i\Delta t,X_{i\Delta t})(X_{(i+1)\Delta t}-X_{i\Delta t}),...,\sum_{i=1}^Nf_{p+1}(i\Delta t,X_{i\Delta t})(X_{(i+1)\Delta t}-X_{i\Delta t})\right)^t.
\]
This motivates the continuous time estimator $ \hat{\theta}_T=Q_T^{-1}P_T $ with
\[  Q_T=\left(\begin{array}{ccc}   
 \int_0^T f_1(t,X_t)f_1(t,X_t)dt  &  \dots  & \int_0^T f_1(t,X_t)f_{p+1}(t,X_t)dt \\
 \vdots & \! \!   & \vdots \\
\int_0^T f_{p+1}(t,X_t)f_1(t,X_t)dt & \dots & \int_0^T f_{p+1}(t,X_t)f_{p+1}(t,X_t)dt
\end{array}\right)
\]
and
\[ P_{T}:=\left( \int_0^T f_1(t,X_t)dX_t ,..., \int_0^T f_p(t,X_t)dX_t \right)^t .
\]
In the special case of the fractional Ornstein Uhlenbeck process we have $ \theta=(\mu_1,...,\mu_p,\alpha) $ and
$ f(t,x):=\left(\varphi_1,...,\varphi_p,-x\right)^t  $. This yields for $ T=n\nu $ the estimator
\begin{equation} \label{thetahat} \hat{\theta}_n:=Q_n^{-1}P_n \end{equation}
with 
$$ P_n:=\left(\int_0^{n\nu}\varphi_1(t)dX_t,...,\int_0^{n\nu}\varphi_p(t)dX_t,-\int_0^{n\nu}X_tdX_t\right)^t  $$
and 
\[   Q_n:=\left(\begin{array}{cc} G_n & -a_n \\ -a_n^t & b_n \end{array}\right) , \]
where
\[  G_n:= \left( \begin{array}{ccc} 
     \int_0^{n\nu}\varphi_1(t)\varphi_1(t)dt & \dots &   \int_0^{n\nu}\varphi_1(t)\varphi_p(t)dt \\
    \vdots & & \vdots \\
      \int_0^{n\nu}\varphi_p(t)\varphi_1(t)dt & \dots &   \int_0^{n\nu}\varphi_p(t)\varphi_p(t)dt \\
    \end{array}\right)=n\nu I_p  ,\]
$$   a_n^t:=\left(\int_0^{n\nu}\varphi_1(t)X_tdt,...,\int_0^{n\nu}\varphi_p(t)X_tdt\right)  $$
and
$$   b_n:=\int_0^{n\nu}X_t^2dt .$$

Note that $I_p$ denotes the $p$-dimensional unit matrix.

First we deduce an explicit representation of the estimator $ \hat{\theta}_n $. For simplicity from now on we set $\nu=1$.
\begin{proposition} \label{ThetaVariation}
We have $ \hat{\theta}_n=\theta+\sigma Q_n^{-1}R_n $ with 
$$  R_n:=\left(\int_0^{n}\varphi_1(t)dB_t^H,...,\int_0^{n}\varphi_p(t)dB_t^H,-\int_0^{n}X_tdB_t^H\right)^t .$$
\end{proposition}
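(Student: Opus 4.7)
The plan is to substitute the SDE into the definition of $P_n$ and check that $P_n = Q_n \theta + \sigma R_n$, from which the claim follows upon left-multiplying by $Q_n^{-1}$.

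First I would compute each component of $P_n$ by writing $dX_t = \sum_{j=1}^p \mu_j \varphi_j(t)\,dt - \alpha X_t\,dt + \sigma\,dB_t^H$. For the $i$-th component ($1\le i\le p$), this gives
\[
\int_0^n \varphi_i(t)\,dX_t = \sum_{j=1}^p \mu_j \int_0^n \varphi_i(t)\varphi_j(t)\,dt - \alpha \int_0^n \varphi_i(t) X_t\,dt + \sigma \int_0^n \varphi_i(t)\,dB_t^H,
\]
and by the assumed orthonormality (with $\nu=1$) the first sum equals $n\mu_i$, the second is $\alpha(a_n)_i$, so the $i$-th component of $P_n$ is $n\mu_i - \alpha(a_n)_i + \sigma(R_n)_i$. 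For the last component, substituting the SDE into $-\int_0^n X_t\,dX_t$ gives $-\sum_{j=1}^p \mu_j(a_n)_j + \alpha b_n - \sigma \int_0^n X_t\,dB_t^H$, which is exactly the last component of $Q_n \theta + \sigma R_n$.

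Collecting the components in matrix form, the coefficient of $\mu = (\mu_1,\ldots,\mu_p)^t$ is $\begin{pmatrix} nI_p \\ -a_n^t \end{pmatrix}$ and the coefficient of $\alpha$ is $\begin{pmatrix} -a_n \\ b_n \end{pmatrix}$, so $P_n = Q_n \theta + \sigma R_n$. Multiplying by $Q_n^{-1}$ on the left, using the definition $\hat\theta_n = Q_n^{-1}P_n$ from \eqref{thetahat}, yields $\hat\theta_n = \theta + \sigma Q_n^{-1} R_n$.

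The only subtlety worth flagging is that $\int_0^n \varphi_i(t)\,dX_t$ and $\int_0^n X_t\,dX_t$ must be interpreted so that the decomposition of $dX_t$ is linear in the three contributions; this is ensured by the divergence integral interpretation discussed after equation \eqref{DGL}. The linearity of the divergence integral in its integrand and the Leibniz-type splitting are all that is used, and there is no need to invoke any It\^o-type correction since no second-derivative term arises. Hence no real obstacle appears; the proposition is essentially an algebraic identity once the SDE is inserted.
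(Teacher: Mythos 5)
Your proposal is correct and follows essentially the same route as the paper: substitute the SDE into each component of $P_n$, obtain $P_n = Q_n\theta + \sigma R_n$, and left-multiply by $Q_n^{-1}$. The extra remark on the linearity of the divergence integral is a reasonable clarification but does not change the argument.
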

\begin{proof}
Using equation (\ref{DGL}), we obtain for the $i$-th component ($1\leq i\leq p$) of the vector $ P_n $ the representation
$$     \int_0^{n} \varphi_i(t)dX_t=\sum_{j=1}^p\mu_j\int_0^{n}\varphi_i(t)\varphi_j(t)dt-\alpha\int_0^{n}\varphi_i(t)X_tdt+\sigma\int_0^{n}\varphi_i(t)dB_t^H  $$
and for the $p+1$-th component
$$   \int_0^{n}X_tdX_t=\sum_{j=1}^p\mu_j\int_0^{n}X_t\varphi_j(t)dt-\alpha\int_0^{n}X_t^2dt+\sigma\int_0^{n}X_tdB^H_t . $$
This yields $ P_n=Q_n\theta+\sigma R_n $ from which together with $ \hat{\theta}=Q_n^{-1}P_n $ proves the claim of the proposition.
\end{proof}

Furthermore, we can compute the matrix $ Q_n^{-1} $ explicitly.

\begin{proposition} \label{KonkretMatrix}
We obtain
\[  Q_n^{-1} = \frac{1}{n}\left(\begin{array}{cc} I_p+\gamma_n\Lambda_n\Lambda_n^t & -\gamma_n\Lambda_n \\
                                                                                   -\gamma_n\Lambda_n^t  & \gamma_n      \end{array}\right) .
\]
with
$$  \Lambda_n=\left(\Lambda_{n,1},...,\Lambda_{n,p}\right)^t:=\left(\frac{1}{n}\int_0^n\varphi_1(t)X_tdt,...,\frac{1}{n}\int_0^n\varphi_p(t)X_tdt\right)^t   $$
and
$$ \gamma_n :=\left(\frac{1}{n}\int_0^nX_t^2dt-\sum_{i=1}^p\Lambda_{n,i}^2\right)^{-1}   .$$
\end{proposition}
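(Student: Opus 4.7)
The plan is to apply the block matrix inversion formula to the $2\times 2$ block structure of $Q_n$. The key observation is that the top-left block $G_n = nI_p$ is a scalar multiple of the identity, so it is trivially invertible with $G_n^{-1} = \frac{1}{n}I_p$. Consequently the Schur complement of $G_n$ in $Q_n$ is just the scalar
\[ S := b_n - (-a_n^t)(nI_p)^{-1}(-a_n) = b_n - \tfrac{1}{n} a_n^t a_n. \]
Using the relation $a_n = n\Lambda_n$, which is immediate from the definition of $\Lambda_n$, this rewrites as $S = b_n - n \sum_{i=1}^p \Lambda_{n,i}^2 = n/\gamma_n$, so $S^{-1} = \gamma_n/n$.

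Plugging $A^{-1} = \frac{1}{n}I_p$, $B = -a_n$, $C = -a_n^t$ and $S^{-1} = \gamma_n/n$ into the standard block inversion formula then produces the four blocks of $Q_n^{-1}$: the top-left is $A^{-1} + A^{-1} B S^{-1} C A^{-1}$, which collapses to $\frac{1}{n}I_p + \frac{\gamma_n}{n}\Lambda_n\Lambda_n^t$ once $a_n = n\Lambda_n$ is substituted; the off-diagonal blocks come from $-A^{-1}BS^{-1}$ and $-S^{-1}CA^{-1}$, which are proportional to $\Lambda_n$ and $\Lambda_n^t$ respectively; and the bottom-right is $S^{-1} = \gamma_n/n$. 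Pulling out the common factor $1/n$ yields the matrix written in the statement.

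Alternatively, and perhaps more cleanly, one verifies the identity directly by multiplying $Q_n$ by the candidate matrix on the right-hand side and checking block by block that the result equals $I_{p+1}$. Each of the four resulting block identities reduces, after substituting $a_n = n\Lambda_n$ and cancelling, to the defining relation $\gamma_n^{-1} = n^{-1}b_n - \sum_{i=1}^p \Lambda_{n,i}^2$; no additional input is required.

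The main obstacle here is purely arithmetic bookkeeping of signs and factors of $n$ in the Schur complement step; there is no conceptual difficulty, because the very special form $G_n = nI_p$ (which in turn comes from the assumed orthonormality of the $\varphi_i$ in $L^2([0,\nu],\nu^{-1}\ell)$) reduces the Schur complement to a scalar and avoids any nontrivial matrix inversion.
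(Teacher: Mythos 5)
Your route --- block inversion via the Schur complement of $G_n=nI_p$, or equivalently direct verification of $Q_nQ_n^{-1}=I_{p+1}$ --- is the natural one; the paper itself gives no argument here, only a citation to \cite{DFK}, so there is nothing methodological to compare. The problem is that if you actually carry out the computation you describe, you do not land on the matrix in the statement: the off-diagonal blocks come out with the \emph{opposite} sign. With $B=C^t=-a_n$ and $a_n=n\Lambda_n$, the block-inversion formula gives
\[
-A^{-1}BS^{-1}=-\tfrac{1}{n}I_p\,(-a_n)\,\tfrac{\gamma_n}{n}=+\tfrac{\gamma_n}{n}\Lambda_n,
\]
not $-\tfrac{\gamma_n}{n}\Lambda_n$. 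Equivalently, your proposed direct verification fails for the displayed matrix: the top-left block of the product is
\[
\tfrac{1}{n}\bigl(nI_p(I_p+\gamma_n\Lambda_n\Lambda_n^t)+(-a_n)(-\gamma_n\Lambda_n^t)\bigr)=I_p+2\gamma_n\Lambda_n\Lambda_n^t\neq I_p .
\]
A scalar sanity check makes this concrete: for $p=1$, $n=1$, $a_n=2$, $b_n=5$ one has $Q_n=\left(\begin{smallmatrix}1&-2\\-2&5\end{smallmatrix}\right)$ with inverse $\left(\begin{smallmatrix}5&2\\2&1\end{smallmatrix}\right)$, whereas the stated formula (here $\Lambda_n=2$, $\gamma_n=1$) produces $\left(\begin{smallmatrix}5&-2\\-2&1\end{smallmatrix}\right)$, which is not an inverse of $Q_n$.

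So either the proposition as printed carries a sign error in its off-diagonal blocks (which would then propagate to the limit matrix $C$ in Proposition \ref{MatrixKonvergenz}), or your computation must be reported with the correct sign $+\gamma_n\Lambda_n$; what you cannot do is assert, as you do, that ``each of the four resulting block identities reduces \dots to the defining relation of $\gamma_n$'' for the matrix as written --- that is false, and it glosses over precisely the sign bookkeeping you yourself identify as the only nontrivial point of the proof. Once the off-diagonal blocks are corrected to $+\gamma_n\Lambda_n$ and $+\gamma_n\Lambda_n^t$, your Schur-complement computation ($S=b_n-\tfrac{1}{n}a_n^ta_n=n/\gamma_n$, hence $S^{-1}=\gamma_n/n$) and the resulting four blocks are all correct and the argument is complete.
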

\begin{proof}
See \cite{DFK} for the proof.
\end{proof}

Note that, since we will show in Proposition \ref{MatrixKonvergenz} that the limit of $nQ_n^{-1}$ is well defined, especially $\lim_{n\to\infty} \gamma_n=\gamma>0$, this implies that for large enough $n$ also $nQ_n^{-1}$ is well defined almost surely.

\section{The consistency of the estimator}

In this section we prove that the estimator $ \hat{\theta}_n  $ is consistent using the representation of  Proposition \ref{ThetaVariation}.

\begin{theorem} \label{consistent}
For $H\in(1/2, 3/4)$ $ \hat{\theta}_n $ defined in (\ref{thetahat}) converges in probability to $ \theta $ as $ n\rightarrow\infty $.
\end{theorem}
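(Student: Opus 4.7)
The plan is to start from Proposition \ref{ThetaVariation}, which gives $\hat{\theta}_n - \theta = \sigma Q_n^{-1} R_n$, and combine it with the explicit form of $Q_n^{-1}$ in Proposition \ref{KonkretMatrix}. After carrying out the matrix multiplication, every entry of $Q_n^{-1} R_n$ is a bounded algebraic expression in $\Lambda_n$, $\gamma_n$, and the quantities
\[
\frac{1}{n}\int_0^n \varphi_i(t)\,dB_t^H \quad (i=1,\dots,p), \qquad \frac{1}{n}\int_0^n X_t\,dB_t^H.
\]
So consistency reduces to two tasks: (a) $\Lambda_n$ and $\gamma_n$ converge a.s.\ to finite limits with $\gamma>0$; (b) each of the normalised stochastic integrals above converges to zero in probability.

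For (a) the natural tool is Proposition \ref{Approx} together with the ergodicity from Proposition \ref{Ergodisch}. Since $|X_t-\tilde X_t|\to 0$ a.s.\ and $\varphi_i$ is bounded, the Cesaro averages defining $\Lambda_n$ and $b_n/n$ have the same limits as the corresponding averages of $\tilde X$. Splitting the latter integrals over the unit intervals $[k-1,k]$ and invoking Birkhoff's ergodic theorem applied to the stationary ergodic sequence $W_k$ (whose values of $\int_0^1 \varphi_i\tilde X$ and $\int_0^1 \tilde X^2$ are integrable by Gaussianity) yields $\Lambda_n\to\Lambda$ and $b_n/n\to b$ a.s. The limit $b-\sum_i \Lambda_i^2$ is the expected $L^2([0,1])$-squared distance from $\tilde X$ to $\mathrm{span}(\varphi_1,\dots,\varphi_p)$, hence strictly positive, so $\gamma_n\to\gamma>0$.

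For (b), the first $p$ integrals are divergence (equivalently Wiener) integrals of deterministic, bounded, periodic functions, so
\[
\mathrm{Var}\Bigl(\int_0^n \varphi_i(t)\,dB_t^H\Bigr) = H(2H-1)\int_0^n\!\!\int_0^n \varphi_i(s)\varphi_i(t)|s-t|^{2H-2}\,ds\,dt = O(n^{2H}),
\]
which, divided by $n^2$, is $O(n^{2H-2})\to 0$ since $H<1$; Chebyshev gives convergence in probability.

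The real obstacle is the last component, $\frac{1}{n}\int_0^n X_t\,dB_t^H$, where the integrand is itself stochastic. I would first use Proposition \ref{Approx} (and the fact that $X-\tilde X$ decays exponentially) to replace $X_t$ by the stationary solution $\tilde X_t$, up to a term of smaller order. For the remaining divergence integral I would use the Malliavin-calculus second-moment formula for Skorokhod integrals,
\[
\E\Bigl[\Bigl(\int_0^n \tilde X_t\,\delta B_t^H\Bigr)^2\Bigr] = \E\|\tilde X\|_\mathcal{H}^2 + \E\langle D\tilde X,(D\tilde X)^\ast\rangle_{\mathcal{H}\otimes\mathcal{H}},
\]
and control each term through the known covariance of $\tilde X$ (which decays like $|s-t|^{2H-2}$) and the Malliavin derivative of the stationary fractional OU process. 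The relevant double integrals produce kernels of the form $|s-t|^{4H-4}$; for $H<3/4$ these are Lebesgue-integrable in one variable, yielding a bound of order $n$ on the second moment, whence the normalisation by $n^2$ gives convergence to $0$ by Chebyshev. This is precisely where the hypothesis $H<3/4$ enters and constitutes the main technical difficulty of the proof.
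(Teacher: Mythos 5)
Your proposal is correct and follows essentially the same route as the paper: the paper likewise reduces the theorem to the almost sure convergence $nQ_n^{-1}\to C$ (Proposition \ref{MatrixKonvergenz}, proved exactly as you suggest via Proposition \ref{Approx} and the ergodic theorem from Proposition \ref{Ergodisch}) together with an $L^2$ bound showing $n^{-H}R_n$ is bounded in $L^2$ (Proposition \ref{VektorKonvergenz}), where the term with stochastic integrand is handled by citing Hu--Nualart's result that $n^{-1}\E[(\int_0^n Z_t\,dB_t^H)^2]$ converges for $H\in(1/2,3/4)$ --- precisely the Malliavin-calculus estimate you sketch. One small correction to your bookkeeping: the second moment of $\int_0^n \tilde X_t\,dB_t^H$ is $O(n^{2H})$ rather than $O(n)$, since the deterministic periodic part $\tilde h$ contributes $H(2H-1)\int_0^n\int_0^n \tilde h(s)\tilde h(t)|s-t|^{2H-2}\,ds\,dt$, which grows like $n^{2H}$, and only the centered part $\tilde Z$ gives $O(n)$; but as $n^{2H-2}\to 0$ for $H<1$ this does not affect your conclusion.
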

\begin{proof}
Since by Proposition \ref{ThetaVariation} we have $ \hat{\theta}_n=\theta+\sigma nQ_n^{-1}\frac{1}{n}R_n $ the statement of the theorem follows directly by the following two Propositions, Proposition \ref{MatrixKonvergenz} and Proposition  \ref{VektorKonvergenz}.
\end{proof}
\begin{proposition} \label{VektorKonvergenz}
For $H\in(1/2, 3/4)$ the sequence $ n^{-H}R_n $ is bounded in $ L^2$.
\end{proposition}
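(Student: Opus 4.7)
The vector $R_n$ splits into $p$ Wiener components $\int_0^n\varphi_i(t)\,dB_t^H$ and a final divergence component $-\int_0^n X_t\,\delta B_t^H$. I would handle the two types separately, in each case bounding the $L^2$-norm by $Cn^H$.

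For $1\le i\le p$, $\int_0^n\varphi_i(t)\,dB_t^H$ is a deterministic Wiener integral with variance
\[
H(2H-1)\int_0^n\!\!\int_0^n \varphi_i(s)\varphi_i(t)\,|s-t|^{2H-2}\,ds\,dt.
\]
Since $|\varphi_i|\le C$ by assumption and $\int_0^n\!\int_0^n|s-t|^{2H-2}\,ds\,dt$ is of order $n^{2H}$, each of these components is $O(n^H)$ in $L^2$.

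For the final component I would use the explicit solution formula from Section 2 to decompose $X_t=m(t)+Z_t$, where $m(t)=e^{-\alpha t}\xi_0+e^{-\alpha t}\int_0^t e^{\alpha s}L(s)\,ds$ is (modulo the factor $\xi_0$) deterministic and uniformly bounded in $L^2$, while $Z_t=-\sigma e^{-\alpha t}\int_0^t e^{\alpha s}\,dB_s^H$ is the Gaussian stochastic part. The $m$-contribution $\int_0^n m(t)\,dB_t^H$ is again a Wiener integral and is handled as above. For the random part I would apply the Malliavin--Skorokhod isometry
\[
E\!\left[\left(\int_0^n Z_t\,\delta B_t^H\right)^{\!2}\right]=E\bigl[\|Z\|_{\mathcal{H}_n}^2\bigr]+E\bigl[\langle DZ,(DZ)^{*}\rangle_{\mathcal{H}_n\otimes\mathcal{H}_n}\bigr],
\]
where $\mathcal{H}_n$ is the Hilbert space associated with $B^H$ on $[0,n]$. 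The first term equals $H(2H-1)\int_0^n\!\!\int_0^n E[Z_sZ_t]\,|s-t|^{2H-2}\,ds\,dt$. Because $Z_t$ differs from the stationary fractional OU process by a term that tends to $0$ exponentially fast (cf.\ Proposition \ref{Approx}), $E[Z_sZ_t]$ is uniformly bounded and decays like $|s-t|^{2H-2}$ at large lags; the near-diagonal part of the double integral contributes $O(n)$, and the far-diagonal part is at most a constant times $\int_{[0,n]^2\cap\{|s-t|>1\}}|s-t|^{4H-4}\,ds\,dt$, which for $H<3/4$ is also $O(n)$. Since $H>1/2$ we have $n=o(n^{2H})$. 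The Malliavin derivative $D_rZ_t=-\sigma e^{-\alpha(t-r)}\mathbf{1}_{[0,t]}(r)$ is deterministic and exponentially localised in $t-r$, so substituting into the $\mathcal{H}_n\otimes\mathcal{H}_n$ inner product gives a bound of the same order.

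The main obstacle is precisely this Skorokhod integral $\int_0^n Z_t\,\delta B_t^H$: one needs both terms of the isometry to be $O(n^{2H})$, and the restriction $H<3/4$ appears exactly to keep the far-diagonal integrand $|s-t|^{4H-4}$ manageable (its integral over $[0,n]^2$ grows only linearly in $n$). All remaining steps are routine manipulations of fractional Brownian covariances and exponential decay estimates.
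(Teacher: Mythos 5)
Your proposal follows the same overall route as the paper: the first $p$ components are bounded exactly as you describe (fractional isometry plus $|\varphi_i|\le C$ and the $O(n^{2H})$ growth of $\int_0^n\int_0^n|s-t|^{2H-2}\,ds\,dt$), and the last component is handled by the same decomposition of $X_t$ into initial condition, bounded deterministic drift part $h(t)$, and the Gaussian part $Z_t=\sigma e^{-\alpha t}\int_0^te^{\alpha s}\,dB^H_s$, with the deterministic contributions again controlled as (essentially) Wiener integrals. The one substantive difference is how the critical term $E\bigl[\bigl(\int_0^n Z_t\,\delta B^H_t\bigr)^2\bigr]$ is treated: the paper simply cites Hu and Nualart for the convergence of $n^{-1}E\bigl[\bigl(\int_0^n Z_t\,\delta B^H_t\bigr)^2\bigr]$ when $H\in(1/2,3/4)$ and concludes $E_6=O(n^{1-2H})\to0$, whereas you re-derive the $O(n)$ bound from the Skorokhod isometry, splitting both the $\|Z\|_{\mathcal H}^2$ term and the Malliavin-derivative term into near- and far-diagonal parts and using that $\int_{|s-t|>1}|s-t|^{4H-4}\,ds\,dt=O(n)$ precisely when $H<3/4$. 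That is exactly where the restriction $H<3/4$ enters in Hu--Nualart as well, so your self-contained argument is correct and makes the role of the hypothesis visible; the paper's citation is shorter but opaque on this point. Two cosmetic remarks: since $\xi_0$ is random, $\int_0^n m(t)\,dB^H_t$ is not literally a Wiener integral, but independence of $\xi_0$ from $B^H$ (and $E[\xi_0^2]<\infty$, which the paper implicitly assumes) lets you factor it out, just as the paper does in its terms $E_1$ and $E_2$; and by bounding the three contributions separately you silently use $(a+b+c)^2\le 3(a^2+b^2+c^2)$, which spares you the paper's observation that the cross terms between the first and second Wiener chaos vanish.
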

\begin{proof}
Assume that $\sup_t|\varphi_i(t)|\leq C<\infty$, then by the isometry for fractional Brownian motion the variance of the first $ p $ components is
\begin{eqnarray*}
 \Var\left(n^{-H}\int_0^n\varphi_i(t)dB_t^H\right)&=&\E\left[\left(n^{-H}\int_0^n\varphi_i(t)dB_t^H\right)^2\right]  \\
       &=& H(2H-1)n^{-2H}\int_0^n\int_0^n \varphi_i(u)\varphi_i(v)|u-v|^{2H-2}dudv \\
 &\leq&  2C^2 H(2H-1)n^{-2H}\int_0^n\int_0^v(v-u)^{2H-2}dudv =C^2.
\end{eqnarray*}
In order to compute the variance of the last component, we write the solution of equation (\ref{DGL}) in the form
$$ X_t=e^{-\alpha t}\xi_0+h(t)+Z_t $$
with
$$ h(t):=e^{-\alpha t}\sum_{i=1}^p\mu_i\int_0^t e^{\alpha s}\varphi_i(s)ds   $$
and 
$$ Z(t):= \sigma e^{-\alpha t}\int_0^t e^{\alpha s}dB_s^H.  $$
The variance of the last component then is               
\begin{eqnarray*}
  \Var\left(n^{-H}\int_0^nX_tdB_t^H\right) &=& n^{-2H}\E\left[\left(\int_0^nX_tdB_t^H\right)^2\right]\\
     &=& n^{-2H} \E\left[\left(\int_0^n\left(e^{-\alpha t}\xi_0+h(t)+Z_t\right) dB_t^H\right)^2\right] \\
     &=& E_1+E_2+E_3+E_4+E_5+E_6,
\end{eqnarray*}
with
\begin{eqnarray*}
 E_1 &:=& n^{-2H}\E\left[\left(\int_0^ne^{-\alpha t}\xi_0dB_t^H\right)^2\right]=\E\left[\xi_0^2\right]\int_0^1\int_0^1e^{-\alpha n(u+v)}|u-v|^{2H-2}dudv ,\\
  E_2 &:=&  2n^{-2H}\E\left[\int_0^ne^{-\alpha t}\xi_0dB_t^H\int_0^nh(t)dB_t^H\right] \\
         &=& 2\E[\xi_0]\int_0^1\int_0^1e^{-\alpha nu}h(nu)|u-v|^{2H-2}dudv ,\\
  E_3 &:=& 2n^{-2H}\E\left[\int_0^ne^{-\alpha r}\xi_0dB_r^H\int_0^nZ_tdB_t^H\right]\\
 E_4 &:=& n^{-2H}\E\left[\left(\int_0^nh(t)dB_t^H\right)^2\right] =\int_0^1\int_0^1h(nu)h(nv)|u-v|^{2H-2}dudv ,\\
  E_5 &:=& 2n^{-2H}\E\left[\int_0^nh(r)dB_r^H\int_0^n Z(t)dB_t^H\right] \end{eqnarray*}
and
$$  E_6:=n^{-2H}\E\left[\left(\int_0^nZ_tdB_t^H\right)^2\right]=n^{-2H}\sigma^2\E\left[\left(\int_0^n\int_0^te^{-\alpha(t-s)}dB_s^HdB_t^H\right)^2\right] .$$
We now have to discuss the boundedness of each of those terms separately. The term $ E_1 $ is obviously bounded as $ n\rightarrow\infty $.
Since the function $ t\mapsto h(t) $ is bounded, it follows also that the terms $ E_2 $ and $ E_4 $ stay bounded. $ E_3 $ and $ E_5 $ are zero by the isometry property for multiple Wiener integrals of different order (cf.\ e.g. \cite{NP}).
 $ E_6 $ also stays bounded as $n\to\infty$, since we know by \cite{HuNu} that $\frac{1}{n}\E\left[\left(\int_0^nZ_tdB_t^H\right)^2\right]$ is convergent for $H\in(1/2,3/4)$.
\end{proof}

\begin{proposition}\label{MatrixKonvergenz}
As $ n\rightarrow\infty $ we obtain that $ nQ_n^{-1} $ converges almost surely to 
\[ C:=\left(\begin{array}{cc}   I_p+\gamma\Lambda\Lambda^t & -\gamma\Lambda \\
                                                -\gamma\Lambda^t & \gamma \end{array}\right),\] 
where
$$  \Lambda=(\Lambda_1,...,\Lambda_p)^t:=\left( \int_0^1\varphi_1(t)\tilde{h}(t)dt,..., \int_0^1\varphi_p(t)\tilde{h}(t)dt \right)^t $$
and
$$ \gamma:=\left(\int_0^t\tilde{h}^2(t)dt+\sigma^2\alpha^{-2H}H\Gamma(2H)-\sum_{i=1}^p\Lambda_i^2\right)^{-1}, $$
with $\tilde{h}(t):=e^{-\alpha t}\sum_{i=1}^p\mu_i\int_{-\infty}^te^{\alpha s}\varphi_i(s)ds $.

\end{proposition}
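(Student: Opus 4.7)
By Proposition \ref{KonkretMatrix} it suffices to prove the two scalar almost sure convergences
\[
\Lambda_{n,i} \longrightarrow \Lambda_i \quad (1\leq i\leq p), \qquad \frac{1}{n}\int_0^n X_t^2\,dt \longrightarrow \int_0^1\tilde{h}^2(t)\,dt + \sigma^2\alpha^{-2H}H\Gamma(2H);
\]
once these hold, $\gamma_n\to\gamma$ follows by continuous mapping (the limit being finite and positive by Bessel's inequality applied to the orthonormal system $\varphi_1,\dots,\varphi_p$ together with the strictly positive contribution of $\sigma^2\alpha^{-2H}H\Gamma(2H)$), and the convergence $nQ_n^{-1}\to C$ entry by entry is then immediate.

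The common strategy is to replace $X_t$ by the stationary solution $\tilde X_t$ and then invoke the ergodic theorem. Proposition \ref{Approx} gives $|X_t-\tilde X_t|\to 0$ almost surely, so by a standard Cesàro argument
\[
\frac{1}{n}\int_0^n \varphi_i(t)\bigl(X_t-\tilde X_t\bigr)\,dt \longrightarrow 0, \qquad \frac{1}{n}\int_0^n \bigl(X_t^2-\tilde X_t^2\bigr)\,dt \longrightarrow 0
\]
almost surely; for the second expression one factors $X_t^2-\tilde X_t^2=(X_t-\tilde X_t)(X_t+\tilde X_t)$ and uses that the stationary increments of $X_t+\tilde X_t$ keep the averages $n^{-1}\int_0^n(X_t+\tilde X_t)\,dt$ bounded. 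Thus it is enough to compute the corresponding limits with $\tilde X_t$.

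For these limits, split the integrals over the unit periods and use the periodicity of $\varphi_i$ to write
\[
\frac{1}{n}\int_0^n \varphi_i(t)\tilde X_t\,dt = \frac{1}{n}\sum_{k=1}^n \int_0^1 \varphi_i(s)\,W_k(s)\,ds, \qquad \frac{1}{n}\int_0^n \tilde X_t^2\,dt = \frac{1}{n}\sum_{k=1}^n \int_0^1 W_k(s)^2\,ds,
\]
where $W_k(s)=\tilde X_{k-1+s}$ is the stationary ergodic $C[0,1]$-valued sequence of Proposition \ref{Ergodisch}. Birkhoff's ergodic theorem then yields the almost sure limits $\int_0^1\varphi_i(s)\,\E[W_1(s)]\,ds$ and $\int_0^1\E[W_1(s)^2]\,ds$, provided the integrands are $L^1$; this is routine because $\tilde X_s$ is Gaussian with uniformly bounded mean and variance. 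Since $\E[\tilde X_s]=\tilde h(s)$, the first gives exactly $\Lambda_i$.

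The step I expect to be the real work is evaluating $\int_0^1 \E[W_1(s)^2]\,ds$ explicitly. Here $\E[\tilde X_s^2]=\tilde h(s)^2 + \Var(\tilde X_s)$, and $\Var(\tilde X_s)$ is the variance of the stationary fractional Ornstein–Uhlenbeck process without drift; using the isometry for the divergence integral and the substitution $u=s-r,\ v=s-r'$ one obtains
\[
\Var(\tilde X_s) = \sigma^2 H(2H-1)\int_0^\infty\!\!\int_0^\infty e^{-\alpha(r+r')}|r-r'|^{2H-2}\,dr\,dr',
\]
a constant independent of $s$. A computation using the Laplace transform of $|r-r'|^{2H-2}$ (or, equivalently, the Cheridito--Kawaguchi--Maejima formula cited through \cite{CKM}) identifies this double integral with $\sigma^2\alpha^{-2H}H\Gamma(2H)$. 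Plugging back gives the announced limit $\int_0^1\tilde h^2(t)\,dt+\sigma^2\alpha^{-2H}H\Gamma(2H)$, and combining everything yields $\gamma_n\to\gamma$ and hence the convergence of $nQ_n^{-1}$ to $C$.
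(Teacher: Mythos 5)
Your proposal is correct and follows essentially the same route as the paper: reduce to the two scalar limits via Proposition \ref{KonkretMatrix}, pass from $X_t$ to $\tilde{X}_t$ using Proposition \ref{Approx}, apply the ergodic theorem to the stationary sequence of Proposition \ref{Ergodisch}, and close with Bessel's inequality. The only (immaterial) difference is that you evaluate the stationary variance $\sigma^2\alpha^{-2H}H\Gamma(2H)$ directly from the isometry and the identity $H(2H-1)\Gamma(2H-1)=H\Gamma(2H)$, whereas the paper imports the convergence of $\frac{1}{n}\int_0^n\tilde{Z}_t^2\,dt$ from \cite{HuNu}.
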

\begin{proof}
We use the following notation $$ \tilde{X}_t=\tilde{h}(t)+\tilde{Z}_t $$ with
$$ \tilde{h}(t):=e^{-\alpha t}\sum_{i=1}^p\mu_i\int_{-\infty}^te^{\alpha s}\varphi_i(s)ds $$
being periodic with period 1 and
$$  \tilde{Z}_t:=\sigma e^{-\alpha t}\int_{-\infty}^te^{\alpha s}dB_s^H .  $$
We investigate the limit behaviour of the different entries in $ n^{-1}Q_n $ separately.

The ergodic theorem, Proposition \ref{Ergodisch} and Proposition \ref{Approx} yield
\begin{eqnarray*}
 \lim_{n\rightarrow\infty}\Lambda_{n,i} &=& \lim_{n\rightarrow n}\frac{1}{n}\int_0^n X_t\varphi_i(t)dt
     = \lim_{n\rightarrow n}\frac{1}{n}\int_0^n \tilde{X}_t\varphi_i(t)dt 
  =  \lim_{n\rightarrow n}\frac{1}{n}\sum_{k=1}^n\int_{k-1}^k\tilde{X}_t\varphi_i(t)dt \\
&=& \E\left[\int_0^1\tilde{X}_t\varphi_i(t)dt\right] = \int_0^1 \tilde{h}(t)\varphi_i(t)dt+\int_0^1\E\left[\int_0^t\varphi_i(t)e^{-\alpha (t-s)}dB_s^H\right]dt,
\end{eqnarray*} 
where the second term in the last expression is zero by the properties of Wiener integrals.

Note that the sequence of $ C[0,1] $ valued random variables $ \left[s\mapsto\tilde{Z}_{k+s}\right]; k\in\mathbb{Z} $ is stationary and ergodic, hence by the ergodic theorem we have as $ n\rightarrow\infty $
$$  \frac{1}{n}\int_0^n\tilde{Z}_tdt =\frac{1}{n}\sum_{k=1}^n\int_{k-1}^k\tilde{Z}_t \longrightarrow \E\left[\int_0^1\tilde{Z}_tdt\right]=\E[\tilde{Z}_0]=0. $$
Moreover, we have 
\begin{eqnarray*}
  \left|\frac{1}{n}\int_0^n\left(Z_t-\tilde{Z}_t\right)dt\right| \leq \frac{\sigma}{n}\int_0^ne^{-\alpha t}\left|\int_{-\infty}^0e^{\alpha s}dB_s^H\right|dt\longrightarrow 0 \ \ \ 
\mbox{as $ n\rightarrow\infty $}.
\end{eqnarray*}
Those two facts now imply that
$$  \lim_{n\rightarrow\infty}\left|\frac{1}{n}\int_0^nZ_tdt \right|=0   . $$
It then follows from the boundedness of the function $ t\mapsto h(t) $ that
$$   \limsup_{n\rightarrow\infty}\left|\frac{1}{n}\int_0^n X_tdt\right|= \limsup_{n\rightarrow\infty}\left|\frac{1}{n}\int_0^n \left(e^{-\alpha t}\xi_0+h(t)+Z_t\right)dt \right|<\infty $$
From Proposition \ref{Approx} we also have
$$  \limsup_{n\rightarrow\infty}\left|\frac{1}{n}\int_0^n \tilde{X}_tdt\right|<\infty .$$
Those two inequalities together with Proposition \ref{Approx} yield
$$  \left|\frac{1}{n}\int_0^n\tilde{X}_t^2dt-\frac{1}{n}\int_0^nX_t^2dt\right|=\left|\frac{1}{n}\int_0^n\left(\tilde{X}_t+X_t\right)\left(\tilde{X}_t-X_t\right)dt\right|\longrightarrow0
   \ \ \ \mbox{as $ n\rightarrow\infty $}  .$$
Now using the ergodic theorem for $\tilde{h}$ and \cite{HuNu} for $\tilde{Z}$ we obtain 
\begin{eqnarray*}
    \lim_{n\rightarrow\infty}\frac{1}{n}\int_0^nX_t^2dt &=& \lim_{n\rightarrow\infty}\frac{1}{n}\int_0^n\tilde{X}_t^2dt\\&=&\int_0^1\tilde{h}^2(t)dt+\sigma^2\alpha^{-2H}H\Gamma(2H) ,
\end{eqnarray*}
noting that the mixed term is zero due to the properties of Wiener integrals.

Thus the general expression for $ \gamma_n $ in Proposition \ref{KonkretMatrix} yields
\begin{eqnarray*} 
   \lim_{n\rightarrow\infty}\gamma_n &=& \lim _{n\rightarrow\infty}\left(\frac{1}{n}\int_0^nX_t^2dt-\sum_{i=1}^p\Lambda_{n,i}^2\right)^{-1} \\
   &=& \left(\int_0^t\tilde{h}^2(t)dt+\sigma^2\alpha^{-2H}H\Gamma(2H)-\sum_{i=1}^p\Lambda_i^2\right)^{-1}.
\end{eqnarray*}
Since the functions $ \varphi_i; i=1,...,p $ are orthonormal in $ L^2[0,1] $ we can use the Bessel inequality 
$$ \sum_{i=1}^p\Lambda_i^2 =\sum_{i=1}^p\left(\int_0^1 \varphi_i(t)\tilde{h}(t)dt\right)^2 \leq \int_0^1 \tilde{h}^2(t) dt $$
to see that the above limit is well defined and finite, which completes our proof.
\end{proof}

\section{The asymptotic normality of the estimator}
In this section we prove asymptotic normality of our estimator which may be reduced to a limit theorem for dependent normally distributed random  variables.

\begin{theorem}
For $H\in(1/2,3/4)$ we obtain for $\hat{\vartheta}_n$ defined by (\ref{thetahat})
$$ n^{1-H} (\hat{\vartheta}_n-\vartheta)\stackrel{\mathcal D}{\longrightarrow} {\mathcal N}(0,\sigma^2 C\Sigma_0 C) $$ 
with
\[  \Sigma_0 :=\left(\begin{array}{cc} \bar{G} & -\bar{a} \\ -\bar{a}^t & \bar{b} \end{array}\right) , \]
where
\[  \bar{G}:= \left( \begin{array}{ccc} 
     \alpha_H\int_0^1\int_0^1\varphi_1(s)\varphi_1(t)|t-s|^{2H-2}dsdt & \dots & \alpha_H\int_0^1\int_0^1\varphi_1(s)\varphi_p(t)|t-s|^{2H-2}dsdt  \\
    \vdots & & \vdots \\
      \alpha_H\int_0^1\int_0^1\varphi_p(s)\varphi_1(t)|t-s|^{2H-2}dsdt & \dots &   \alpha_H\int_0^1\int_0^1\varphi_p(s)\varphi_p(t)|t-s|^{2H-2}dsdt \\
    \end{array}\right)  ,\]
$$   \bar{a}^t:=\left(\alpha_H\int_0^1\int_0^1\varphi_1(s)\tilde{h}(t)|t-s|^{2H-2}dsdt,...,\alpha_H\int_0^1\int_0^1\varphi_p(s)\tilde{h}(t)|t-s|^{2H-2}dsdt\right),  $$
$$   \bar{b}:=\alpha_H\int_0^1\int_0^1\tilde{h}(s)\tilde{h}(t)|t-s|^{2H-2}dsdt, $$
$$\alpha_H=H(2H-1),$$
$$\tilde{h}(t):=e^{-\alpha t}\sum_{i=1}^p\mu_i\int_{-\infty}^te^{\alpha s}\varphi_i(s)ds $$
and $C$ is defined in Proposition (\ref{MatrixKonvergenz}).
\end{theorem}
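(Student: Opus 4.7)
The plan is to combine the representation $\hat\theta_n-\theta=\sigma Q_n^{-1}R_n$ from Proposition \ref{ThetaVariation} with the matrix limit of Proposition \ref{MatrixKonvergenz}. Writing
$$n^{1-H}(\hat\theta_n-\theta)=\sigma\,(nQ_n^{-1})\,(n^{-H}R_n)$$
and invoking the almost sure convergence $nQ_n^{-1}\to C$ together with Slutsky's theorem, the theorem reduces to the central limit statement
$$n^{-H}R_n\stackrel{\mathcal D}{\longrightarrow}\mathcal N(0,\Sigma_0),$$
since the linear image of a Gaussian then gives $\sigma C\cdot\mathcal N(0,\Sigma_0)=\mathcal N(0,\sigma^2 C\Sigma_0 C)$, where we use the symmetry of $C$.

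To prove this central limit theorem, the first step is to pass from $X_t$ to its stationary version $\tilde X_t=\tilde h(t)+\tilde Z_t$. By Proposition \ref{Approx}, $|X_t-\tilde X_t|$ decays exponentially almost surely, so the contribution of $\int_0^n(X_t-\tilde X_t)dB_t^H$ to the last component of $n^{-H}R_n$ is negligible in $L^2$. The remaining terms then split into two types. The first $p$ components $n^{-H}\int_0^n\varphi_i(t)dB_t^H$ together with the deterministic piece $-n^{-H}\int_0^n\tilde h(t)dB_t^H$ of the last component are Wiener integrals of deterministic functions against $B^H$, hence jointly Gaussian, with covariances computable from the fractional isometry
$$\E\!\left[\int_0^n\!f(t)dB_t^H\!\int_0^n\!g(t)dB_t^H\right]=\alpha_H\!\int_0^n\!\int_0^n\!f(s)g(t)|s-t|^{2H-2}ds\,dt.$$
A direct computation exploiting the period-one structure of $\varphi_i$ and $\tilde h$ identifies the limits of these scaled covariances with the corresponding blocks of $\Sigma_0$.

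The only remaining piece is the second-chaos term $-n^{-H}\int_0^n\tilde Z_tdB_t^H$, which lies in the second Wiener chaos with respect to the underlying two-sided Brownian motion driving $B^H$ (since $\tilde Z_t$ is itself a first-chaos element via the Mandelbrot-van Ness representation). Its asymptotic normality I would obtain from the Nualart-Peccati fourth-moment theorem: one expresses the integral as a double Wiener-It\^o integral, identifies its $L^2$-limit, and verifies that the $L^2$-norms of the relevant contractions vanish. The restriction $H\in(1/2,3/4)$ enters exactly here, and these estimates are essentially those of Hu and Nualart \cite{HuNu} already invoked in the consistency proof. Joint convergence of the first- and second-chaos parts to a centred Gaussian vector then follows via the Cram\'er-Wold device: for any $\lambda\in\mathbb R^{p+1}$ the projection $\lambda^t(n^{-H}R_n)$ splits into a Gaussian first-chaos term and a second-chaos term whose asymptotic variances add up to $\lambda^t\Sigma_0\lambda$, and the fourth-moment criterion provides the Gaussian limit for the latter.

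The main obstacle is exactly this second-chaos CLT for $n^{-H}\int_0^n\tilde Z_tdB_t^H$ and its joint Gaussian limit with the first-chaos components: it requires a careful computation of the double-Wiener kernel, identification of the limiting covariance, and verification of the fourth-moment/contraction condition. The condition $H<3/4$ is precisely what forces those contraction norms to vanish at the correct rate, so it is essential here just as in the consistency proof.
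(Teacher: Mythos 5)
Your overall architecture---the Slutsky reduction via $\hat\theta_n-\theta=\sigma\,(nQ_n^{-1})\,(n^{-H}R_n)$, the replacement of $X_t$ by $\tilde X_t=\tilde h(t)+\tilde Z_t$, and the identification of the limit covariance of the first-chaos (deterministic-integrand) components through the isometry---matches the paper. The genuine gap is your treatment of the second-chaos term $n^{-H}\int_0^n\tilde Z_t\,dB_t^H$. You declare this the main obstacle, propose a fourth-moment/contraction argument to establish a CLT for it, and assert that its asymptotic variance combines with the first-chaos variance to produce $\lambda^t\Sigma_0\lambda$. That last assertion cannot be right: look at $\bar b$, which is built from $\tilde h$ alone and contains no contribution from $\tilde Z$. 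The point you are missing is that under the normalization $n^{-H}$ this term is \emph{negligible}, not asymptotically Gaussian with positive variance. By the Hu--Nualart result already invoked in the consistency proof, $\frac1n\Var\bigl(\int_0^n\tilde Z_t\,dB_t^H\bigr)$ converges for $H\in(1/2,3/4)$, hence
$$ \Var\Bigl(n^{-H}\int_0^n\tilde Z_t\,dB_t^H\Bigr)=O\bigl(n^{1-2H}\bigr)\longrightarrow 0 $$
because $H>1/2$. The term therefore vanishes in $L^2$ and drops out by Slutsky; no fourth-moment theorem, no contraction estimates, and no joint first-/second-chaos convergence are needed. This is also exactly where the two endpoints of the interval enter: $H<3/4$ is used only to invoke the Hu--Nualart variance convergence, and $H>1/2$ kills the term at the $n^{-H}$ scale, so your explanation of $H<3/4$ as the condition making contraction norms vanish ``at the correct rate'' for a nondegenerate CLT is misplaced. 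If you carried your programme through you would discover the second-chaos limit is degenerate, so the proof could be repaired, but as written it aims at the wrong limit law and hence the wrong covariance matrix.

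Two smaller points. First, for the remaining first-chaos vector the paper proceeds by writing $n^{-H}\int_0^nf(t)\,dB_t^H=n^{-H}\sum_{i=1}^nY_i$ with $Y_i=\int_{i-1}^if(t)\,dB_t^H$ a stationary Gaussian sequence, citing Theorem 7.2.11 of Samorodnitsky--Taqqu and Cram\'er--Wold (Proposition \ref{Limit}); since the vector is exactly Gaussian for every $n$, your direct computation of the scaled covariances is an equivalent route and only the covariance limit matters. Second, you should actually perform that ``direct computation exploiting the period-one structure'' rather than assert it reproduces the blocks of $\Sigma_0$: after rescaling, $n^{-2H}\alpha_H\int_0^n\int_0^nf(s)g(t)|s-t|^{2H-2}\,ds\,dt$ is dominated by the far-off-diagonal region where the periodic integrands decouple into their means, so the outcome of this computation needs to be checked carefully against the stated entries of $\Sigma_0$ before you rely on it.
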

Let us discuss the differences to the Brownian case (cf. \cite{DFK}) and the fractional Ornstein Uhlenbeck case without periodic mean function (cf. \cite{HuNu}) before proceeding with the proof. The rate of convergence $n^{1-H}$ is slower than in the Brownian case. Furthermore, it is also slower than the rate $n^{1/2}$ for the mean reverting parameter in a fractional Ornstein Uhlenbeck setting with $L=0$. This is due to the special structure of our drift coefficient, which in our setting also dominates the component of $\alpha$ leading to a slower rate even for $\alpha$ and a different entry in the covariance matrix. Furthermore, unless in the Brownian case $\Sigma_0\neq C^{-1}$. This is due to the isometry formula for fractional Brownian motion with $H>1/2$, which is not simply derived from the scalar product in $L^2$, but from the scalar product in a larger Hilbert space ${\cal{H}}$. Namely for a fixed time interval $[0,T]$  ${\cal{H}}$ is defined as the closure of the set of real valued step functions on $[0,T]$ with respect to the scalar product $<1_{[0,t]}, 1_{[0,s]}>_{{\cal{H}}}=\E(B_t^HB_s^H)$.
\begin{proof}
By the representation
\[\hat{\vartheta}_n-\vartheta=\sigma Q_n^{-1}R_n\]
and the almost sure convergence of $nQ_n^{-1}\to C$ it is sufficient to prove that as $n\to\infty$
$$ \left(n^{-H}\int_0^n \varphi_1(t)dB^H_t,...,n^{-H}\int_0^n \varphi_p(t)dB^H_t, - n^{-H}\int_0^n  X_t dB^H_t\right)^t \stackrel{\mathcal D}{\longrightarrow} {\mathcal N}(0,\Sigma_0). $$
As in the proof of Proposition \ref{MatrixKonvergenz} we may replace $X_t$ by $\tilde{X}_t$ with the representation $\tilde{X}_t=\tilde{Z}_t+\tilde{h}(t)$ and may deduce that $\tilde{Z}_t$ does not contribute to the covariance matrix. Namely the contributions to the off-diagonal elements in $\bar{a}$ and the mixed term of $\bar{b}$ are zero by the isometry formula for multiple Wiener integrals of different order (cf.\ e.g. \cite{NP}). Furthermore, $\Var(n^{-H}\int_0^n\tilde{Z}_t dB_t^H)\to 0$ as $n\to\infty$, since we know by \cite{HuNu} that $\frac{1}{n}\Var(\int_0^n\tilde{Z}_t dB_t^H)$ is convergent and $2H>1$ for $1/2<H<3/4$.

Hence it is sufficient to show that for the $1$-periodic functions $\varphi_i$ $(1\leq i \leq p)$ and $\tilde{h}$ as $n\to\infty$
$$ \left(n^{-H}\int_0^n \varphi_1(t)dB^H_t,...,n^{-H}\int_0^n \varphi_p(t)dB^H_t, - n^{-H}\int_0^n  \tilde{h}(t) dB^H_t\right)^t \stackrel{\mathcal D}{\longrightarrow} {\mathcal N}(0,\Sigma_0). $$
This is an immediate consequence of the following Proposition \ref{Limit}.
\end{proof}
\begin{proposition}\label{Limit}
Let $f_k$ $(1\leq k\leq m)$ be periodic real valued functions with period 1, then for $H>1/2$ and $ n\rightarrow\infty $ 
\begin{eqnarray*} \lefteqn{\left(n^{-H}\int_0^n f_1(t)dB^H_t,...,n^{-H}\int_0^n f_m(t)dB^H_t\right)^t}\\ &&\stackrel{\mathcal D}{\longrightarrow} {\mathcal N}\left(0,H(2H-1)\left(\int_0^1\int_0^1f_i(t)f_j(s)|t-s|^{2H-2}dsdt\right)_{1\leq i,j \leq m} \right). 
\end{eqnarray*} 
\end{proposition}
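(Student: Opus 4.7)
The plan is to leverage the joint Gaussianity of the vector. Each coordinate $n^{-H}\int_0^n f_k(t)\,dB^H_t$ is a Wiener integral of a deterministic function against fractional Brownian motion, hence centered Gaussian; any linear combination $\sum_k c_k\,n^{-H}\int_0^n f_k(t)\,dB^H_t$ equals $n^{-H}\int_0^n\bigl(\sum_k c_k f_k(t)\bigr)dB^H_t$, so the whole vector is jointly centered Gaussian. Because the characteristic function of such a vector equals $\exp(-\tfrac12\xi^t\Sigma_n\xi)$, L\'{e}vy's continuity theorem reduces convergence in distribution to entrywise convergence of the covariance matrix $\Sigma_n$. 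It therefore suffices to prove, for each pair $(i,j)$,
$$\mathrm{Cov}\!\left(n^{-H}\!\int_0^n\! f_i(t)\,dB^H_t,\,n^{-H}\!\int_0^n\! f_j(t)\,dB^H_t\right)\longrightarrow\alpha_H\!\int_0^1\!\int_0^1\! f_i(t)f_j(s)|t-s|^{2H-2}\,ds\,dt,$$
with $\alpha_H=H(2H-1)$.

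By the isometry for Wiener integrals with respect to fBm in the regime $H>1/2$ (already used in Proposition~\ref{VektorKonvergenz}), the left-hand side equals $\alpha_H n^{-2H}\int_0^n\!\int_0^n f_i(t)f_j(s)|t-s|^{2H-2}\,dt\,ds$, so the remaining task is a purely deterministic asymptotic analysis of this double integral. I would exploit the $1$-periodicity of $f_i$ and $f_j$ by splitting $[0,n]^2$ into the $n^2$ unit cells $[k,k+1]\times[l,l+1]$ with $0\le k,l\le n-1$ and substituting $t=k+u$, $s=l+v$ with $u,v\in[0,1]$; using $f_i(k+u)=f_i(u)$ and $f_j(l+v)=f_j(v)$, the expression becomes
$$\alpha_H n^{-2H}\int_0^1\!\int_0^1 f_i(u)f_j(v)\Bigl[\sum_{d=-(n-1)}^{n-1}(n-|d|)|d+u-v|^{2H-2}\Bigr]du\,dv$$
after grouping the double sum by the difference $d=k-l$ and collecting the triangular multiplicities.

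The main obstacle is the asymptotic analysis of this weighted sum. For $H\in(1/2,1)$ the kernel $|d|^{2H-2}$ is not summable in $d$, so the correct order in $n$ must be extracted from a balance between the algebraic decay of $|d|^{2H-2}$ and the triangular weights $n-|d|$; in practice one separates the diagonal $d=0$ contribution, which carries the local singular kernel $|u-v|^{2H-2}$, from the off-diagonal bulk, which is typically handled by a Riemann-sum comparison after a suitable rescaling of the summation index. One must also control boundary contributions from the incomplete periods at the edges of $[0,n]^2$, which are of lower order under the standing boundedness assumption on the $f_k$. Once the pointwise-in-$(u,v)$ limit of the bracketed sum is identified and a dominated convergence argument transfers it through the $du\,dv$ integration, the Gaussian characteristic-function argument from the first paragraph immediately upgrades the entrywise covariance convergence to joint convergence in distribution, yielding the claimed multivariate normal limit.
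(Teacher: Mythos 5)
Your reduction to convergence of the covariance matrix via joint Gaussianity is correct, and it is a more elementary route than the paper's, which writes each coordinate as $n^{-H}\sum_{i=1}^n Y_i^k$ with $Y_i^k=\int_{i-1}^i f_k(t)\,dB^H_t$ and invokes a limit theorem for long-range dependent stationary Gaussian sequences (Theorem 7.2.11 of Samorodnitsky--Taqqu) together with Cram\'er--Wold; for exactly Gaussian vectors both routes reduce to the same variance computation. The problem is that you stop precisely where the content of the proposition lies: the asymptotics of $n^{-2H}\sum_{d=-(n-1)}^{n-1}(n-|d|)\,|d+u-v|^{2H-2}$ are announced as ``the main obstacle'' and then only described in outline. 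This is a genuine gap, and not a routine one, because carrying the computation out does not produce the limit you are aiming for. Since $2H-2\in(-1,0)$, the kernel $|d|^{2H-2}$ is not summable, the off-diagonal bulk dominates, and a Riemann-sum comparison gives
$$n^{-2H}\sum_{d=-(n-1)}^{n-1}(n-|d|)\,|d+u-v|^{2H-2}\;\longrightarrow\;2\int_0^1(1-y)\,y^{2H-2}\,dy=\frac{1}{H(2H-1)},$$
\emph{uniformly in} $(u,v)\in[0,1]^2$: the dependence on $u-v$ enters each off-diagonal term only at relative order $1/|d|$, and the diagonal cell $d=0$, the only place where the singular kernel $|u-v|^{2H-2}$ survives, contributes $n^{1-2H}|u-v|^{2H-2}\to0$ after normalization. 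Feeding this into your bracketed expression yields the limit covariance $\bigl(\int_0^1 f_i\bigr)\bigl(\int_0^1 f_j\bigr)$, a rank-one matrix, and not $\alpha_H\int_0^1\int_0^1 f_i(t)f_j(s)|t-s|^{2H-2}\,ds\,dt$. The two expressions coincide when the $f_k$ are constants (both equal $f_if_j$), which is why a quick sanity check does not catch the discrepancy; but for a mean-zero periodic $f$, say $f(t)=\cos(2\pi t)$, your own isometry formula gives $\Var\bigl(n^{-H}\int_0^n f(t)\,dB^H_t\bigr)=O(n^{1-2H})\to0$, whereas the asserted limit is the strictly positive quantity $\|f\|_{\mathcal H}^2$.

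So the plan cannot be completed as written: the dominated-convergence step you defer would have to deliver a $(u,v)$-dependent limit of the weighted sum, and it does not. The mismatch in fact points at the statement itself rather than at your framing: the paper's proof rests on the identity $\Cov(Y_i^k,Y_j^l)=\rho_H(|i-j|)\,\alpha_H\int_0^1\int_0^1 f_k(t)f_l(s)|t-s|^{2H-2}\,ds\,dt$, which holds for $i=j$ but fails for $i\neq j$ unless the $f$'s are constant; the correct asymptotics are $\Cov(Y_1^k,Y_{1+d}^l)\sim\alpha_H\,d^{2H-2}\int_0^1 f_k(t)\,dt\int_0^1 f_l(s)\,ds$. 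Your set-up, pushed to completion, would detect exactly this. As it stands, the probabilistic part of your argument is fine, but the deterministic limit that carries the whole proposition is neither proved nor obtainable in the claimed form.
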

\begin{proof}
Since $f_k$ is periodic with period 1, we may write for $1\leq k\leq m$
$$ n^{-H}\int_0^n f_k(t)dB^H_t=n^{-H}\sum_{i=1}^n\int_{i-1}^i f_k(t)dB^H_t=n^{-H}\sum_{i=1}^n Y_i^k$$
with
$$ Y_i^k\sim {\mathcal N}\left(0, H(2H-1)\int_0^1\int_0^1f_k(t)f_k(s)|t-s|^{2H-2}dsdt\right)$$
and
\begin{eqnarray*}
\Cov(Y_i^k, Y_j^l)&=&\rho_H(|i-j|) H(2H-1)\int_0^1\int_0^1f_k(t)f_l(s)|t-s|^{2H-2}dsdt \\
&\sim& n^{2H-2} H^2(2H-1)^2\int_0^1\int_0^1f_k(t)f_l(s)|t-s|^{2H-2}dsdt
\end{eqnarray*}
for $1\leq i,j \leq n$ and $1\leq k,l \leq m$, since
$$\rho_H(n)=\frac{1}{2}((n+1)^{2H}+(n-1)^{2H}-2n^{2H}).$$
Hence the sequences $(Y_i^k)_i$ satisfy the conditions of Theorem 7.2.11 in \cite{ST}, which implies as $n\to\infty$
$$n^{-H}\sum_{i=1}^n Y_i^k\stackrel{\mathcal D}{\longrightarrow} {\mathcal N}\left(0,H(2H-1)\int_0^1\int_0^1f_k(t)f_k(s)|t-s|^{2H-2}dsdt\right).$$
Finally the Cramer-Wold device together with a similar argument for the covariance terms lead to the desired result.
\end{proof}
{\bf Acknowledgement:} The financial support of the DFG (German Science Foundation) SFB 823: Statistical modeling of nonlinear dynamic processes (projects C3 and C5) is gratefully acknowledged.

\end{document}